\DeclareMathAlphabet{\eusm}{OT1}{eusm}{m}{n}
\newtheorem{theorem}{Theorem}[section]
\newtheorem{defi}[theorem]{Definition}
\newtheorem{cor}[theorem]{Corollary}
\newtheorem{lem}[theorem]{Lemma}
\newtheorem{remark}[theorem]{Remark}
\newtheorem{question}[theorem]{Question}
\def\Soc{\mbox{Soc\/}}
\def\End{\mbox{End\/}}
\begin{document}
\title{MacWilliams extending conditions and quasi-Frobenius rings}
\subjclass[2010]{16L60, 16D50, 16P20, 94B05, 11T71}
\keywords{MacWilliams ring, quasi-Frobenius ring, automorphism-invariant modules, self-injective rings, perfect rings, artinian rings}
\author{Pedro A. Guil Asensio}
\thanks{The work of the first author is partially supported by the Spanish Government under
		grant PID2020-113206GB-I00/AEI/10.13039/501100011033 which includes FEDER funds of the EU, and by Fundaci\'on S\'eneca of
		Murcia under grant
		19880/GERM/15.
	The work of the second author is partially supported by a grant from Simons Foundation (grant number 426367).}
\address{Departamento de Mathematicas, Universidad de Murcia, Murcia, 30100, Spain}
\email{paguil@um.es}
\author{Ashish K. Srivastava}
\address{Department of Mathematics and Statistics, Saint Louis University, St.
Louis, MO-63103, USA} \email{ashish.srivastava@slu.edu}

\maketitle

\begin{abstract}
MacWilliams proved that every finite field has the extension property for Hamming weight which was later extended in a seminal work by Wood who characterized finite Frobenius rings as precisely those rings which satisfy the MacWilliams extension property. In this paper, the question of when is a MacWilliams ring quasi-Frobenius is addressed. It is proved that a right or left noetherian left 1-MacWilliams ring is quasi-Frobenius thus answering the different questions asked in \cite{Mio,SZ}. We also prove that a right perfect, left automorphism-invariant ring is left self-injective. In particular, this yields that if $R$ is a right (or left) artinian, left automorphism-invariant ring, then $R$ is quasi-Frobenius, thus answering a question asked in \cite{Mio}. 
\end{abstract}

\bigskip

\bigskip

\section{Introduction}

\noindent A linear code of length $n$ over a ring $R$ is simply a submodule $L$ of $R^n$. The Hamming weight $wt(x)$ of an element $x=(x_1, \ldots, x_n)\in R^n$ is defined as the number of non-zero entries $x_i$ in $x$. In 1962, F. J. MacWilliams proved in her Ph.D. dissertation that any Hamming weight isometry between two codes over a finite field $\mathbb F$ extends to a monomial transformation of the ambient space $\mathbb F^n$ \cite{Mac}. Recall that a monomial transformation of $R^n$ is a map $f:R^n\rightarrow R^n$ of the form $f(x_1, \ldots, x_n)=(x_{\sigma(1)}u_1, \ldots, x_{\sigma(n)}u_n)$ for some permutation $\sigma \in S_n$ and invertible elements $u_i\in R$. As it was observed by Goldberg \cite{G}, the MacWilliams Extension Theorem is an analogue of the Witt’s Extension Theorem for quadratic forms (see e.g \cite{Lam}). The original proof of MacWilliams was later simplified by Bogart, Goldberg and Gordon \cite{BGG}. Ward and Wood gave an alternative proof of the MacWilliams Extension Theorem using character-theoretic approach \cite{WW}. 

A fundamental result in the study of linear codes over rings is the characterization of finite Frobenius rings by Wood \cite{Wood} as precisely those rings $R$ which satisfy the following MacWilliams extension property: every Hamming weight preserving isomorphism between left submodules of $R^n$ extends to a monomial transformation of $R^n$. Inspired by this, a ring $R$ is called {\it left MacWilliams} if every Hamming weight preserving homomorphism $f:L\rightarrow R^n$ from a left submodule $L$ of $R^n$ extends to an automorphism of $R^n$. Thus, if $R$ is left MacWilliams, then for $n=1$ the extension property tell us that each monomorphism $f:I\rightarrow R$ from a left ideal $I$ of $R$ extends to an automorphism of $_RR$. This is how automorphism-invariant modules naturally appear in the theory of linear codes over rings. 

Recall that a module $M$ which is invariant under automorphisms of its injective envelope is called an {\it automorphism-invariant module}. The study of modules which are invariant under the action of certain subsets of the endomorphism ring of their
injective envelope goes back to the pioneering work of Johnson and Wong \cite{JW} in which they characterized modules $M$ for which every homomorphism $f:L\rightarrow M$ from a submodule $L$ of $M$ extends to an endomorphism of $M$ as precisely those modules $M$ which are invariant under any endomorphism of their injective envelope $E(M)$. Such modules are called quasi-injective modules. Injective modules are trivially automorphism-invariant. Moreover, every quasi-injective module is also automorphism-invariant. It was shown in \cite{ESS} that a module $M$ is automorphism-invariant if and only if every monomorphism from a submodule of $M$ extends to an endomorphism of $M$. Automorphism-invariant modules are also known as pseudo-injective modules in the literature. This class of modules was first studied by Dickson and Fuller in \cite{DF} and it has been studied extensively in recent years. See \cite{Book}, \cite{GA-S}, \cite{GA-KT-S}, \cite{GQS} for more details on automorphism-invariant modules. If $R$ is a ring such that $_RR$ is a (quasi-)injective module then $R$ is called a left self-injective ring, whereas if $_RR$ is an automorphism-invariant module then $R$ is called a left automorphism-invariant ring.  

Clearly, the MacWilliams extension property for $n=1$ implies that $R$ is a left automorphism-invariant ring, and for $n=2$, the MacWilliams extension property implies that $R$ is left self-injective (see Lemma \ref{basic}).

A ring $R$ is called {\it quasi-Frobenius} if $R$ is left and right artinian and left and right self-injective. The class of quasi-Frobenius rings was introduced by Nakayama in his seminal paper \cite{N1} to study the duality between the categories of left and right finitely generated modules over them. Later, Nakayama \cite{N2} and Ikeda \cite{Ikeda} characterized quasi-Frobenius rings as those rings which are right or left artinian and right or left self-injective. Recall that a ring $R$ is called a {\it left perfect ring} if every left $R$-module has a projective cover. From Osofsky \cite{Osofsky} and Kato \cite{Kato}, it follows that if $R$ is a left (or right) perfect, left and right self-injective ring, then $R$ is quasi-Frobenius. In 1976, Carl Faith \cite{Faith1} asked whether every left or right perfect, left self-injective ring $R$ is quasi-Frobenius. Faith initially conjectured the answer to be ``no" even for a semiprimary ring but later changed his mind and conjectured the answer to be ``yes". So, now the following is known as Faith's Conjecture:

\medskip

{\bf Faith's Conjecture:} Every left or right perfect, left self-injective ring is quasi-Frobenius. 

\medskip

\noindent This conjecture still remains open. In this paper, we look at a variation of this conjecture for left automorphism-invariant rings and prove that if $R$ is a right perfect, left automorphism-invariant ring then $R$ is left self-injective. Now, if Faith's Conjecture is true then we will have that every right perfect, left automorphism-invariant ring is quasi-Frobenius. 

One of the main objectives of this paper is to answer the different questions raised in \cite{SZ} and \cite{Mio}. Namely, the following questions are asked in those papers:

\begin{question} \cite[Section 4, Question]{SZ}
Suppose $R$ is a right artinian ring which is left MacWilliams. Does it follow that $R$ is a quasi-Frobenius ring?
\end{question}

 \begin{question}\cite[Question~2.2]{Mio}.
	What can be said about left (or left-right) MacWilliams rings in general?
	Is a Noetherian / Noetherian commutative left MacWilliams ring necessarily Artinian (and
	hence, QF)
\end{question}

\noindent We define the notion of left $n$-MacWilliams ring and, in this terminology, a ring $R$ is left MacWilliams if it is left $n$-MacWilliams for each $n\ge 1$. We prove that any left $2$-MacWilliams ring is left self-injective. And that, indeed, a right or left noetherian (in particular, any left or right artinian) left 1-MacWilliams ring is quasi-Frobenius, thus giving a positive answer to the questions above. In the process, we note that a ring is left $1$-MacWilliams if and only if it is left automorphism-invariant, which allows us to also answer
the following question asked in \cite{Mio}:

\begin{question}\cite[Question~2.6]{Mio}.
	If $R$ is a right artinian, left automorphism-invariant ring, does it follow that $R$ is a quasi-Frobenius ring?
\end{question} 	

\noindent We refer to \cite{AF, Faith1, Good} for any undefined notion used in the text.
 
\section{Results}

\noindent We begin by first defining the terminology we will be using and making some basic observations. 

\begin{defi}
We will call a ring $R$ to be a left $n$-MacWilliams ring if every Hamming weight preserving homomorphism $f:L\rightarrow R^n$ from a left submodule $L$ of $R^n$ extends to an automorphism of $R^n$.
\end{defi}

\noindent Thus, a ring $R$ is left MacWilliams if it is $n$-MacWilliams for each $n\ge 1$. 

\begin{lem} \label{basic} Let $R$ be a ring. 
\begin{enumerate}
\item If $R$ is left 1-MacWilliams, then it is left automorphism invariant.
\item If $R$ is a left 2-MacWilliams, then it is left self-injective. 
\end{enumerate}
\end{lem}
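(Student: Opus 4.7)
For part (1), the plan is to observe that on $R$ the Hamming weight of any nonzero element equals $1$, so a Hamming weight preserving homomorphism $f:L\to R$ from a left ideal $L$ is exactly a monomorphism. The left $1$-MacWilliams hypothesis therefore asserts that every monomorphism from a left submodule of ${}_RR$ into ${}_RR$ extends to an automorphism, and in particular to an endomorphism, of ${}_RR$; by the Er--Singh--Srivastava criterion \cite{ESS} recalled in the introduction, this is precisely the statement that ${}_RR$ is automorphism-invariant.

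For part (2), the plan is to verify Baer's criterion: given a left ideal $I$ of $R$ and a homomorphism $g:I\to R$, the goal is to produce $b\in R$ with $g(x)=xb$ for all $x\in I$. The idea is to encode $g$ as a Hamming weight preserving map on a submodule of $R^2$ and to extract $b$ from the invertible matrix representing the extension provided by $2$-MacWilliams. Concretely, take $L=\{(x,x):x\in I\}\subseteq R^2$ and define $f:L\to R^2$ by $f(x,x)=(g(x),x)$. When $g$ is injective on $I$, both $(x,x)$ and $(g(x),x)$ have Hamming weight $2$ for every $x\in I\setminus\{0\}$, so $f$ is Hamming weight preserving. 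The $2$-MacWilliams hypothesis then provides a matrix $M=\bigl(\begin{smallmatrix}a&b\\c&d\end{smallmatrix}\bigr)\in \mathrm{GL}_2(R)$ with $(x,x)M=(g(x),x)$ for all $x\in I$, which immediately yields $g(x)=x(a+c)$, extending $g$ to right multiplication by $a+c$.

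For an arbitrary (possibly non-injective) $g$, the plan is to reduce to the injective case by translating by a right multiplication: if $u\in R$ can be chosen so that $x\mapsto g(x)+xu$ is injective on $I$, then applying the injective case to $g+(\cdot u)$ produces $w\in R$ with $g(x)+xu=xw$ for every $x\in I$, so that $g(x)=x(w-u)$ extends $g$. The main obstacle I anticipate is establishing the existence of such a $u$: a priori for every $u\in R$ there could exist some nonzero $x_u\in I$ with $g(x_u)+x_u u=0$. Overcoming this will presumably require using the full strength of $2$-MacWilliams via a refined submodule of $R^2$ (for instance $L'=\{(x,y)\in I\oplus I : y-x\in \Ker g\}$, equipped with a carefully chosen map $f'$) so that the weight-preservation argument survives in the presence of a nontrivial $\Ker g$.
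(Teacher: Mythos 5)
Part (1) is correct and is essentially the paper's own argument: on $R^1$ weight preservation is just injectivity, so left $1$-MacWilliams says every monomorphism from a left ideal extends to an endomorphism of ${}_RR$, which by the Er--Singh--Srivastava characterization \cite{ESS} is automorphism-invariance.

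Part (2) has a genuine gap, and it sits exactly where you flag it. Your diagonal construction is fine as far as it goes: for an \emph{injective} $g:I\rightarrow R$ the map $(x,x)\mapsto (g(x),x)$ is weight-preserving, and the extending invertible matrix gives $g(x)=x(a+c)$. But this case carries no new information: once part (1) is known, every monomorphism from a left ideal already extends to an endomorphism of ${}_RR$, i.e.\ to a right multiplication, so the injective case of Baer's criterion follows from $1$-MacWilliams alone. The entire content of statement (2) --- the reason $n=2$ buys self-injectivity while $n=1$ only buys automorphism-invariance --- is the non-injective case, and that is precisely the case you do not prove. Your proposed reduction requires a $u\in R$ such that $x\mapsto g(x)+xu$ is injective on $I$; you give no argument that such a $u$ exists, and there is none in general: for every $u$ one must simultaneously have $xu\neq 0$ for all nonzero $x\in\Ker g$ and $g(x)+xu\neq 0$ for all $x\in I\setminus\Ker g$, two conditions that pull in opposite directions, and the fallback you sketch (a ``refined submodule $L'$ with a carefully chosen map $f'$'') is not carried out. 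As written, the proposal proves only that $R$ is left automorphism-invariant, not left self-injective.

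For comparison, the paper does not go through Baer's criterion at all: it argues that the $2$-MacWilliams condition forces $R\oplus R$ to be an automorphism-invariant module and then applies \cite[Corollary~4.6]{Book}, which says the summands of an automorphism-invariant direct sum are relatively injective; hence ${}_RR$ is ${}_RR$-injective. That route treats injective and non-injective homomorphisms $I\rightarrow R$ uniformly, which is exactly what your weight-preservation bookkeeping on the diagonal cannot do, since $(g(x),x)$ drops to weight $1$ whenever $x$ is a nonzero element of $\Ker g$. To salvage your approach you would need to encode an arbitrary $g$ into a single weight-preserving map on a submodule of $R^2$ (for instance one defined on the graph $\{(x,g(x)):x\in I\}$, where coordinate permutations are automatically weight-preserving), and then still extract a right multiplication from the extending matrix; neither step is present.
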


\begin{proof}
(1) This follows easily from the definition as observed in introduction. 

(2) If $R$ is left 2-MacWilliams then this means $R^2$ is automorphism-invariant. Now, by \cite[Corollary 4.6]{Book}, $_RR$ is $_RR$-injective. Thus, $R$ is left self-injective.
\end{proof} 

\noindent Now, we would like to study when is a 1-MacWilliams ring left self-injective ring. 
The following technical lemma will be needed to prove our main result. Recall that, given a left module $M$, the socle of $M$, denoted by $\Soc(M)$, is the sum of all simple submodules of $M$. We will denote by $J=J(R)$ the Jacobson radical of a ring $R$.

\begin{lem}\label{finite}
	Let $R$ be a right perfect, left automorphism-invariant ring. Then $\Soc(_RR)$ is finitely generated and essential in $_RR$.  
\end{lem}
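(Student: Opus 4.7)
The plan is to handle essentiality and finite generation separately.

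For essentiality, I would invoke Bass's theorem: since $R$ is right perfect, $R$ satisfies the descending chain condition on principal left ideals. Given a nonzero $a\in R$, if $Ra$ contained no simple submodule then one could iteratively pick $b_1\in Ra$, $b_2\in Rb_1$, and so on, with $Rb_{k+1}\subsetneq Rb_k$ at each step, producing a strictly descending chain of principal left ideals and contradicting the DCC. Hence every nonzero principal left ideal meets $\Soc({}_RR)$, so $\Soc({}_RR)$ is essential in ${}_RR$.

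For finite generation, since $R$ is right perfect and therefore semiperfect, I would write $1=e_1+\cdots+e_m$ as a sum of primitive orthogonal idempotents, giving ${}_RR=\bigoplus_{j=1}^m Re_j$ with each $Re_j$ local and indecomposable. Direct summands of an automorphism-invariant module are automorphism-invariant (extend an automorphism of $E(Re_j)$ to an automorphism of $E({}_RR)$ by the identity on a chosen complement, and use that ${}_RR$ is preserved), so each $Re_j$ is left automorphism-invariant. The structure theorem for automorphism-invariant modules from \cite{Book} then splits $Re_j=A_j\oplus B_j$ with $A_j$ quasi-injective and $B_j$ square-free; indecomposability of $Re_j$ forces one of these summands to be zero. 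If $Re_j$ is indecomposable quasi-injective then it is uniform, so $\Soc(Re_j)$ is simple (and nonzero by the first step); if $Re_j$ is square-free then $\Soc(Re_j)$ is a direct sum of pairwise non-isomorphic simple submodules. Since $R/J$ is semisimple artinian there are only finitely many isomorphism classes of simple left $R$-modules, so in both cases $\Soc(Re_j)$ is finitely generated, and hence so is $\Soc({}_RR)=\bigoplus_{j=1}^m\Soc(Re_j)$.

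The main technical input is the structure theorem for automorphism-invariant modules; the remaining ingredients are Bass's DCC characterization of right perfect rings, the semiperfect idempotent decomposition, and the standard fact that an indecomposable quasi-injective module is uniform. The only real point requiring care is the passage of automorphism-invariance to the indecomposable summands $Re_j$ and the compatibility of the quasi-injective/square-free split with their indecomposability, both of which should go through cleanly.
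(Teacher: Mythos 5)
Your proof is correct, but the finite-generation half takes a genuinely different route from the paper. The paper argues by contradiction from scratch: assuming $\Soc(_RR)$ is infinitely generated, it extracts an infinite direct sum $\oplus_{n}C_n$ of mutually isomorphic simples, uses automorphism-invariance to extend certain ``3-cycle'' monomorphisms of this sum to endomorphisms $g_m$ of $_RR$, and then shows the elements $r_m=1-g_m(1)$ satisfy $r_m^2\notin J(R)$ but $r_mr_l\in J(R)$ for $m\neq l$, contradicting semisimplicity of $R/J(R)$. You instead pass to the indecomposable projective summands $Re_j$ (correctly noting that direct summands inherit automorphism-invariance, via $M\cap E(N)=N$) and invoke the structure theorem $M=A\oplus B$ with $A$ quasi-injective and $B$ square-free; indecomposability kills one summand, and then either uniformity (quasi-injective case) or square-freeness plus the finiteness of the set of isomorphism classes of simples bounds the socle. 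Your argument is shorter and more conceptual, but it leans on the full decomposition theorem from \cite{ESS}/\cite{Book}, whereas the paper's argument is elementary, using only the definition of automorphism-invariance together with the facts that $J(R)$ is the left singular ideal and $R/J(R)$ is semisimple. One small point to make explicit: your socle computation in the square-free case needs ``square-free'' in the sense of \cite{Book} (no direct sum of two nonzero isomorphic \emph{submodules}), not merely the absence of a direct summand of the form $X\oplus X$; with the former definition two isomorphic simples in $\Soc(Re_j)$ immediately give a forbidden square, so the step goes through. The essentiality part is the same in both proofs --- Bass's characterization of right perfect rings --- with you spelling out the DCC-on-principal-left-ideals argument that the paper simply cites.
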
	

\begin{proof}
	It is well known that the left socle of a right perfect ring is an essential left ideal (see \cite{Bass}), so we only need to show that $\Soc(_RR)$ is finitely generated. Assume on the contrary that this is not the case. As $R$ is, in particular, semiperfect, there is only a finite number of isomorphism classes of simple left modules. So there must exist an infinite direct sum, say $\oplus_{n\in\mathbb N} C_n$, of simple left modules isomorphic among them contained in $R$.  
	
	Let us call $u_m:C_m\rightarrow \oplus_{n\in\mathbb N}C_n$, for each $m\in \mathbb N$ and $v:\oplus_{n\in\mathbb N}C_n\rightarrow\, _RR$ the inclusions.
	And define, for each $m\in \mathbb N$, homomorphisms $f_m:\oplus_{n\in\mathbb N}C_n\rightarrow\, _RR$ as follows:
	\begin{itemize}
		\item $f_m|_{C_{3m}}=u_{3m+1}\circ \alpha_m$, where $\alpha_m:C_{3m}\rightarrow C_{3m+1}$ is an isomorphism.
		\item $f_m|_{C_{3m+1}}=u_{3m+2}\circ \beta_m$, where $\beta_m:C_{3m+1}\rightarrow C_{3m+2}$ is an isomorphism.
		\item $f_m|_{C_{3m+2}}=u_{3m}\circ \gamma_m$, where $\gamma_m:C_{3m+2}\rightarrow C_{3m}$ is an isomorphism.			
		\item $f_m|_{C_n}=u_n\circ 1_{C_n}$, if $n\neq 3m,3m+1,3m+2$.
	\end{itemize}	
	
	Note that $v\circ f_m:\oplus_{n\in\mathbb N}C_n\rightarrow\, _RR$ is a monomorphism and therefore, there exists a $g_m:_RR\rightarrow _RR$ such that $g_m\circ v= v\circ f_m$ since $R$ is left automorphism-invariant. Let $r_m=1-g_m(1)\in R$. This element $r_m$ has the following properties:
	\begin{enumerate}
		\item If $0\neq x\in C_{3m}$, then $x\cdot r_m=(1-g_m)(x)=x-u_{3m+1}\circ\alpha(x)\neq 0$ since $u_{3m+1}\circ\alpha(x)\in C_{3m+1}$. In particular, this means that $C_{3m}\nsubseteq l_R(r_m)$ and thus, $l_R(r_m)$ is not essential in $_RR$, where $l_R(r_m)$ denotes the left annihilator of $r_m$. Therefore, $r_m\notin J(R)$ since $J(R)$ equals the left singular ideal of $R$ (see e.g. \cite[Theorem~3.3]{Book}).
		\item If $0\neq x\in C_{3m}$, then $$x\cdot r_m^2=(1-g_{m})\circ (1-g_{m})(x)=(1-g_{m})(x-u_{3m+1}\circ\alpha(x))=$$
		$$x-u_{3m+1}\circ\alpha(x)-g_{m}(x-u_{3m+1}\circ\alpha(x))=$$
		$$x-u_{3m+1}\circ\alpha(x)-u_{3m+1}\circ\alpha(x)+u_{3m+2}\beta_m\circ\alpha_m(x)=$$
		$$=x+2u_{3m+1}\circ \alpha(x)-u_{3m+2}\circ\beta_m\circ\alpha_m(x)\neq 0$$
		again since $u_{3m+2}\circ\beta\circ\alpha(x)\in C_{3m+2}$, $u_{3m+1}\circ\alpha(x)\in C_{3m+1}$ and $x\in C_{3m}$. This means that $C_{3m}\nsubseteq l_R(r_m^2)$ and thus, $l_R(r_m^2)$ is not essential in $_RR$. In particular,  $r_m^2\notin J(R)$.
		\item If $x\in C_n$ with $n\neq 3m,3m+1$ or $3m+2$, then $g_m(x)=x$ and thus, $x\cdot r_m=(1-g_m)(x)=x-x=0$.
		\item $r_m\cdot r_l\in J(R)$ when $m\neq l$ since if $n\neq 3m,3m+1$ or $3m+2$, then $C_n\cdot r_m=0$ by (3). Whereas for $x\in C_{3m}\oplus C_{3m+1}\oplus C_{3m+2}$, we have that $x\cdot r_m\in C_{3m}\oplus C_{3m+1}\oplus C_{3m+2}$ and thus, $(x\cdot r_m)\cdot r_l=0$, again by (3).
		This means that $\oplus_{n\in \mathbb N}C_n\subseteq l_R(r_m\cdot r_l)$ and thus$,r_m\cdot r_l\in J(R)$. 
	\end{enumerate}
	Let us now consider the left ideal $L=\sum_{m\in\mathbb N}(Rr_m+J(R))/J(R)$ of 
	$R/J(R)$. As $R/J(R)$ is semisimple 
	(since $R$ is right perfect) $L$ must be finitely generated. So there exists an $m_0$ such that $L= \sum_{m\leq m_0}(Rr_m+J(R))/J(R)$. 
	And this means that there exist $s_m\in R$, for $m=1,\ldots,m_0$ and $j\in J(R)$ such that $r_{m_0+1}=\sum_{m\leq m_0}s_m\cdot r_m +j$. If we multiply on the right by $r_{m_0+1}$, then noting that  $r_{m_0+1}\cdot r_{m}\in J(R)$ for any $m\leq m_0$ in the view of property (4), we have that $r_{m_0+1}^2\in J(R)$, a contradiction to property (2). This shows that $\Soc(_RR)$ must be finitely generated.
\end{proof}	

We can now prove our main result.

\begin{theorem} \label{selfinj}
	Let $R$ be a right perfect left automorphism-invariant ring. Then $R$ is left self-injective. 
\end{theorem}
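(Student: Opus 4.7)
The plan is to leverage Lemma~\ref{finite} to control $\End(E({}_RR))$ and then promote the automorphism-invariance of ${}_RR$ to full quasi-injectivity via a sum-of-units argument. For the cyclic module ${}_RR$, quasi-injectivity coincides with left self-injectivity of $R$ by Baer's criterion (endomorphisms of ${}_RR$ are right multiplications), so this will finish the proof. The sum-of-units principle I shall use is the standard observation that if every element $s$ of $S := \End(E({}_RR))$ is a finite sum of units, then $s({}_RR) \subseteq \sum u_\ell({}_RR) \subseteq {}_RR$ for every $s$, so ${}_RR$ is automatically $S$-invariant.

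By Lemma~\ref{finite}, $\Soc({}_RR)$ is finitely generated and essential in ${}_RR$, so ${}_RR$ has finite uniform dimension; hence $E := E({}_RR)$ decomposes as $F_1^{n_1} \oplus \cdots \oplus F_k^{n_k}$ with the $F_i$ pairwise non-isomorphic indecomposable injectives, and $S/J(S) \cong \prod_{i=1}^{k} M_{n_i}(D_i)$ is semisimple Artinian, where each $D_i := \End(F_i)/J(\End(F_i))$ is a division ring. Standard unit-sum computations for matrix rings over division rings show that every element of each factor $M_{n_i}(D_i)$ is a sum of units, the only global obstruction to applying the sum-of-units principle being the presence of multiple factors isomorphic to $\mathbb F_2$ (that is, several indices with $(n_i, D_i) = (1, \mathbb F_2)$); and sums of units lift from $S/J(S)$ to $S$ in the usual way using that $1 + j$ is a unit for any $j \in J(S)$.

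The main obstacle is therefore the multiple-$\mathbb F_2$ case, which is precisely the source of the three-cycle (as opposed to transposition) construction appearing in the proof of Lemma~\ref{finite}: in characteristic two $(1 - \sigma)^2 = 0$ for any transposition $\sigma$, whereas for a three-cycle $\sigma$ the element $r = 1 - \sigma$ satisfies $r^2 \neq 0$. One handles this case by mimicking the Lemma~\ref{finite} construction inside the injective envelope: assuming ${}_RR$ were not invariant under an $\mathbb F_2$-type projection endomorphism of $E$, one would cycle three isomorphic simples in $\Soc({}_RR)$ via an appropriate monomorphism, extend to some $g \in \End({}_RR)$ by automorphism-invariance, and derive from $r = 1 - g(1)$ the same contradictory properties ($r^2 \notin J(R)$ while $r \cdot r' \in J(R)$ for an infinite orthogonal family of $r'$'s) that clash with the semisimplicity of $R/J(R)$ guaranteed by right perfectness. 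Therefore ${}_RR$ is $S$-invariant, i.e.\ quasi-injective, hence injective, so $R$ is left self-injective.
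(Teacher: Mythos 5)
Your first two paragraphs are essentially correct and coincide with the opening moves of the paper's argument: Lemma~\ref{finite} gives a finitely generated essential socle, hence finite uniform dimension, $E({}_RR)$ is a finite direct sum of indecomposable injectives, $S/J(S)\cong\prod_i M_{n_i}(D_i)$, sums of units lift modulo $J(S)$, and the only configuration in which the sum-of-units principle can fail is when two or more factors are isomorphic to $\mathbb{F}_2$ (with at most one such factor, every element of $S/J(S)$ is still a sum of two or three units, and a sum of $k$ units of $S/J(S)$ forces the parity constraint only at $\mathbb{F}_2$ factors). This is exactly the reduction the paper performs inside the proof of its Claim, via \cite[Theorem~2.21]{Book}.

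The gap is in your third paragraph, which is where the entire content of the theorem lives. You propose to dispose of the multiple-$\mathbb{F}_2$ case by ``mimicking the Lemma~\ref{finite} construction,'' ending with elements $r$ satisfying $r^2\notin J(R)$ while $r\cdot r'\in J(R)$ for an \emph{infinite} orthogonal family of $r'$. But the contradiction in Lemma~\ref{finite} is driven precisely by the infinitude of that family: one needs infinitely many pairwise orthogonal (mod $J(R)$) elements $r_m$ to contradict the finite generation of the left ideal they span in the semisimple ring $R/J(R)$. In the situation of Theorem~\ref{selfinj} that lemma has already been used, the socle is finitely generated, and there are only finitely many indecomposable summands with residue field $\mathbb{F}_2$; a finite orthogonal family of such $r$'s is perfectly compatible with semisimplicity of $R/J(R)$, so no contradiction arises and the sketch does not close the one case that matters. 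The paper's actual route is genuinely different: it decomposes ${}_RR$ into indecomposable projectives $P_1,\dots,P_n$ (one per isomorphism class, by semiperfectness), invokes the structural fact that a non-quasi-injective $P_i$ has $\End(P_i)/J(\End(P_i))\cong\mathbb{F}_2$ and that $P_i$ is quasi-injective whenever all simples in $\Soc(P_i)$ are isomorphic (\cite[Theorem~4.23]{Book}), and then runs a finite counting argument --- iterating the sum-of-two-units criterion to extract, for each $i$, a simple $D_i\subseteq\Soc(P_i)$ not isomorphic to any simple in the socle of any other $P_{i'}$; since there are exactly $n$ isomorphism classes of simples, each $\Soc(P_i)$ must be isotypic, so each $P_i$ is quasi-injective and their direct sum is quasi-injective by relative injectivity. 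None of this is recoverable from the computation in Lemma~\ref{finite}, so as written your proof does not go through.
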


\begin{proof}
As $R$ is right perfect, it is in particular semiperfect. Thus, $_RR$ is a direct sum of indecomposable projective left ideals. Moreover, $\Soc(_RR)$ is finitely generated and essential in $_RR$ by the above lemma.
Call $J=J(R)$ the Jacobson radical of $R$ and let $\Lambda=\{P_1, \ldots, P_n\}$ be a representative set of the isomorphism classes of the indecomposable projective left ideals. As $R$ is semiperfect, each $P_i$ is the projective cover of the simple left module $C_i=P_i/JP_i$ and $\{C_1, \ldots, C_n\}$ is a representative set of the isomorphism classes of the simple left $R$-modules. We know from \cite[Theorem~4.23]{Book} that if $P_i$ is not quasi-injective, then $\End(P_i)/J(\End(P_i))\cong \mathbb{F}_2$. So we may order the set $\Lambda$ as follows:
\begin{enumerate}
	\item $P_i$ is a non-quasiinjective module with 
	$\End(P_i)/J(\End(P_i))\cong \mathbb{F}_2$ for $i=1,\ldots,k$.
	\item $P_i$ is a quasi-injective module for $i=k+1,\ldots,n$.
\end{enumerate} 

Let us study the two cases:

\begin{enumerate}

\item Let us first choose an $i=k+1,\ldots,n$. Note that, as $P_i$ is an indecomposable quasi-injective left ideal, its injective envelope $E(P_i)$ is also indecomposable and $\Soc(P_i)$ consists in a simple left module. Call $D_i$ this simple module.
We claim that $D_i$ cannot be isomorphic to any simple module in the socle of $P_{i'}$ for any $i'\in\{1,\ldots,n\}$ with $i'\neq i$. Assume on the contrary that $D_i$ is isomorphic to a simple module $D_{i'}$ in the socle of $P_{i'}$. Let $\varphi: D_i\rightarrow D_{i'}$ be an isomorphism. As $P_i\oplus P_{i'}$ is automorphism-invariant, $P_{i'}$ is $P_i$-injective and $P_{i'}$ is $P_i$-injective (see e.g. \cite[Corollary~4.6]{Book}). Therefore, $\varphi$ and $\varphi^{-1}$ extend to homomorphisms 
$f:P_i\rightarrow P_{i'}$ and
$g:P_{i'}\rightarrow P_i$ 
such that $g\circ f|_{D_i}=1_{D_i}$. 
As $D_i$ is essential in $P_i$ and $P_i$ is quasi-injective, we deduce that $g\circ f$ is an isomorphism and so, $P_i$ is isomorphic to a direct summand of $P_{i'}$. But this means that $P_i\cong P_{i'}$, since $P_{i'}$ is indecomposable. A contradiction that proves our claim.

\item If $i=1,\ldots,k$, $E(P_i)$ must be a direct sum of indecomposable modules $Q_j$ with $\End(Q_j)/J(\End(Q_j))\cong\mathbb{F}_2$,
by \cite[Corollary~4.29]{Book}. And each $Q_j$ contains a simple module by hypothesis. So $P_i$ contains  at least a simple module whose endomorphism ring must be isomorphic to $\mathbb{F}_2$.

\end{enumerate}

We wish to show that $k=0$ and therefore, $P_i$ is quasi-injective for every $i=1,\ldots,n$. Let us assume on the contrary that $k\geq 1$. 

\medskip

{\bf Claim:} There exists an $i_0\in \{1, \ldots, k\}$ and a simple module $D\subseteq \Soc(P_{i_0})$ which is not isomorphic to any simple module in $\Soc(P_i)$ for any $i\in\{1,\ldots,k\}$ with $i\neq i_0$. 

Assume this is not the case. Call $P=\oplus_{i=1}^k P_i$. We know that $E(P)$ is a direct sum of indecomposables, say $E(P)=\oplus Q_t$. Let us choose a $Q_t$. Then $Q_t$ must contain an essential simple module $D\subseteq \Soc(P_i)$ for some $i\in\{1\dots,k\}$, which must be isomorphic to some other simple in the socle of $Q_{t'}$ with $t'\neq t$ and thus, $Q_t\cong Q_{t'}$. But then, if $S=\End(E(P))$, we deduce that $S/J(S)=\sqcap M_{n_l}(K_l)$ is a product of matrix rings over division rings satisfying that each $n_l\geq 2$. And thus, $S/J(S)$ has no homomorphic image isomorphic to $\mathbb{F}_2$. This means that any element in $S/J(S)$ is the sum of two units (see e.g. \cite[Theorem~2.21]{Book}). And, as an element $s\in S$ is a unit if and only if $s+J(S)$ is a unit in $S/J(S)$, we deduce that any element in $S$ is the sum of two units. It follows that $P$ is invariant under any endomorphism of $E(P)$, since it is invariant under automorphisms of $E(P)$; what shows that $P$ is quasi-injective. This yields a contradiction, proving our claim that there exists an $i_0\in \{1, \ldots, k\}$ and a simple module $D\subseteq \Soc(P_{i_0})$ such that $D$ is not isomorphic to any simple module in $\Soc(P_i)$ for any $i=1,\ldots,k$ with $i\neq i_0$.

Without loss of generality, we may assume that $i=1$ and call $D_1$ that simple module. Let us now call $P'=\oplus_{i=2}^{k}P_i$. Again $P'$ is automorphism-invariant and $E(P')$ is a direct sum of indecomposables. The same argument of the claim shows that, if $P'$ is not quasi-injective, then there exists an $i_1\in \{2, \ldots, k\}$ such that $P_{i_1}$ contains a simple module which is not isomorphic to any simple module in the socle of $P_i$ for any $i\in \{2, \ldots, k\}$ with $i\neq i_1$. Again we may suppose that $i_1=2$ and call $D_2$ that simple module. Note that $D_2 \not\cong D_1$. 

Following this argument we find simple modules $D_i$ in the socle of $P_i$, for $i=1, \ldots, k$, such that $D_i$ is not isomorphic to any simple module in the socle of $P_{i'}$ for any $i'\in \{1,\ldots,k\}$ with $i'> i$.

On the other hand, if $i=k+1,\ldots,n$, then $P_i$ is quasi-injective and contains an essential simple module $D_i$ that cannot be isomorphic to $D_{i'}$ for any $i'\in\{1,\ldots,n\}$ with $i'\neq i$ as shown in Case~(1).
We have then proved that, for each $i=1,\ldots,n$, there exists a simple module in $\Soc(P_i)$ such that $D_i\ncong D_{i'}$ for any $i'\in\{1,\ldots,n\}$ with $i'\neq i$. And, as there are only $n$ isomorphism classes of simple left modules, this means that all simple modules in $\Soc(P_i)$ are isomorphic among them, for any $i=1,\ldots,n$.  But then, $P_i$ is quasi-injective for each $i=1,\ldots,n$ by \cite[Theorem~4.23]{Book}. A contradiction that proves our claim.

Therefore, $\oplus_{i=1}^nP_i$ is a direct sum of quasi-injective modules. But this means that $\oplus_{i=1}^nP_i$ is also quasi-injective by \cite[Corollary~4.7]{Book}.  Thus, as $_RR$ is a direct summand of a finite direct sum of copies of  $\oplus_{n=1}^t P_n$, it is also  quasi-injective, that is, $R$ is a left self-injective ring. 
\end{proof}

\noindent As a consequence, we have the following corollary that answers the Question 2.6 posed in \cite{Mio}. 

\begin{cor} \label{cor1}
Let $R$ be a right (or left) artinian ring. If $R$ is left automorphism-invariant, then $R$ is quasi-Frobenius. 
\end{cor}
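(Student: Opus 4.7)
The plan is to obtain this corollary as a short deduction from Theorem~\ref{selfinj} combined with the classical Nakayama--Ikeda characterization of quasi-Frobenius rings recalled in the introduction. The argument breaks into three essentially routine steps, so I expect no real obstacle; the substantive content has already been carried out in Theorem~\ref{selfinj}.

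First, I would observe that any right artinian (or left artinian) ring $R$ is semiprimary: $J(R)$ is nilpotent and $R/J(R)$ is semisimple artinian. Since a nilpotent ideal is in particular right $T$-nilpotent, any semiprimary ring is both left and right perfect. Thus, whether we assume right artinian or left artinian, $R$ is automatically right perfect, which is the hypothesis needed to feed into Theorem~\ref{selfinj}.

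Second, I would apply Theorem~\ref{selfinj} directly: since $R$ is right perfect and left automorphism-invariant, $R$ is left self-injective. At this point we have, depending on the assumption, a ring which is either right artinian and left self-injective, or left artinian and left self-injective.

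Finally, I would conclude by invoking the Nakayama--Ikeda theorem mentioned in the introduction, which states that a ring that is left or right artinian and left or right self-injective is already quasi-Frobenius. Combined with the previous step, this gives that $R$ is quasi-Frobenius, completing the proof. The only potentially delicate point is making sure the ``right perfect'' hypothesis of Theorem~\ref{selfinj} is available in both cases (right or left artinian), which is precisely what the semiprimary observation in the first step provides.
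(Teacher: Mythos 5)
Your proof is correct and follows exactly the route the paper intends: the corollary is stated as an immediate consequence of Theorem~\ref{selfinj}, using that an artinian ring is semiprimary (hence right perfect) and then the Nakayama--Ikeda characterization recalled in the introduction. No gaps.
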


In particular, our Corollary \ref{cor1} yields the following which answers \cite[Section 4, Question]{SZ} and improves \cite[Corollary 2.7]{Mio}. 

\begin{cor}
If $R$ is a right (or left) artinian, left 1-MacWilliams ring, then $R$ is quasi-Frobenius. 
\end{cor}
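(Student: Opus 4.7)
The plan is simply to combine two results already established in the paper. By Lemma \ref{basic}(1), any left $1$-MacWilliams ring is left automorphism-invariant, since the $1$-MacWilliams extension property asserts exactly that every Hamming-weight-preserving (i.e., injective) homomorphism from a left ideal of $R$ into $R$ extends to an automorphism of $R$, which is one of the equivalent characterisations of left automorphism-invariance recalled in the introduction. Thus, the hypothesis ``left $1$-MacWilliams'' in the statement can be immediately strengthened/translated to ``left automorphism-invariant.''

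Once this translation is made, the ring $R$ satisfies the hypotheses of Corollary \ref{cor1}: it is right (or left) artinian and left automorphism-invariant. Applying Corollary \ref{cor1} directly yields that $R$ is quasi-Frobenius, which is the desired conclusion.

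There is no genuine obstacle here, because the real work has already been carried out in Theorem \ref{selfinj} (right perfect plus left automorphism-invariant implies left self-injective) and in the derivation of Corollary \ref{cor1} from it (where one uses Hopkins--Levitzki to pass from artinian to right perfect, and then the classical Nakayama--Ikeda theorem to conclude that a one-sided artinian, one-sided self-injective ring is quasi-Frobenius). The present corollary is recorded separately only to make explicit the answer to the question of \cite{SZ} and the improvement of \cite[Corollary~2.7]{Mio}; its proof amounts to a single line chaining Lemma \ref{basic}(1) with Corollary \ref{cor1}.
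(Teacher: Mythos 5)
Your proof is correct and matches the paper's own derivation exactly: the corollary is obtained by translating ``left $1$-MacWilliams'' into ``left automorphism-invariant'' via Lemma \ref{basic}(1) and then invoking Corollary \ref{cor1}. Nothing further is needed.
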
  

\begin{remark}
In \cite[Question 2.3]{Mio}, it is asked whether every artinian automorphism-invariant module is a quasi-injective module. We would like to mention that an example of an indecomposable artinian automorphism-invariant module which is not quasi-injective is given in 
\cite[Example 3.1]{GQS} (see also \cite[Example~4.21]{Book}). 
\end{remark} 

We have implicitly shown in our proof of Theorem \ref{selfinj} that if $R$ is a right perfect, left automorphism-invariant ring, then each simple left $R$-module is isomorphic to a simple module in $\Soc(_RR)$. Thus, it follows that 

\begin{cor}
Every right perfect, left automorphism-invariant ring is a left pseudo-Frobenius ring. 
\end{cor}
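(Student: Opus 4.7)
The plan is to use the classical characterization of left pseudo-Frobenius rings: $R$ is left pseudo-Frobenius if and only if $R$ is left self-injective, semiperfect, and every simple left $R$-module embeds in $_RR$ (equivalently, $_RR$ is an injective cogenerator). Two of these ingredients are already in hand: $R$ is semiperfect because it is right perfect, and $R$ is left self-injective by Theorem \ref{selfinj}. So the only remaining task is to show that each simple left $R$-module embeds in $_RR$.

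For that, I would reread the final paragraphs of the proof of Theorem \ref{selfinj}. There the author decomposes $_RR$ (up to direct summands and multiplicities) into a finite set $\{P_1,\dots,P_n\}$ of indecomposable projectives, one from each isomorphism class, and shows that $k=0$, i.e. every $P_i$ is quasi-injective. An indecomposable quasi-injective module has simple socle (its injective envelope is indecomposable), so each $P_i$ has a simple essential socle $D_i$. Case (1) of the proof shows moreover that $D_i\ncong D_{i'}$ whenever $i\ne i'$. Thus $\{D_1,\dots,D_n\}$ is a family of $n$ pairwise non-isomorphic simple left modules.

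Since $R$ is semiperfect, the number of isomorphism classes of simple left $R$-modules equals the number of isomorphism classes of indecomposable projective left ideals, which is exactly $n$. Therefore $\{D_1,\dots,D_n\}$ is a complete set of representatives of the isomorphism classes of simple left $R$-modules, and each of these representatives literally sits inside $\Soc(_RR)$. Consequently every simple left $R$-module embeds in $_RR$, and combined with left self-injectivity and semiperfectness this gives that $R$ is left pseudo-Frobenius.

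The main obstacle is really bookkeeping rather than a new argument: one must carefully extract from the proof of Theorem \ref{selfinj} the two facts (a) each indecomposable projective $P_i$ acquires simple socle once we know $k=0$, and (b) these socles exhaust, up to isomorphism, all simple left modules because their count matches the number of isomorphism classes of simples in the semiperfect ring. Once these are extracted, the corollary is immediate from whichever equivalent formulation of left pseudo-Frobenius ring one prefers.
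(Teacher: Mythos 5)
Your proposal is correct and follows essentially the same route as the paper: the authors likewise observe that the proof of Theorem~\ref{selfinj} implicitly shows every simple left $R$-module is isomorphic to a simple submodule of $\Soc(_RR)$, and combine this with left self-injectivity and semiperfectness to conclude that $_RR$ is an injective cogenerator, i.e.\ $R$ is left pseudo-Frobenius. Your write-up merely makes explicit the bookkeeping (simple pairwise non-isomorphic socles $D_1,\dots,D_n$ exhausting the $n$ isomorphism classes of simples) that the paper leaves implicit.
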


\noindent Now, we proceed to extend our results to left or right noetherian, left automorphism-invariant rings. Recall that the left singular ideal $Z_l(R)$ of a ring $R$, is the left ideal consisting of all elements of $R$ whose left annihilator is an essential left ideal of $R$.

We can now give an affirmative answer to the question raised in \cite{Mio} of whether a left or right noetherian left 1-MacWilliams ring is quasi-Frobenius. 

\begin{theorem}\label{bounded}
	
	Let $R$ be a left or right noetherian ring. If $R$ is left automorphism invariant, then $R$ is quasi-Frobenius.
\end{theorem}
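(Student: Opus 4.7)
The plan is to reduce to the artinian case so that Corollary~\ref{cor1} can be invoked. Concretely, the target is to show that $R$ is semiprimary; combined with the one-sided noetherian hypothesis, the Hopkins--Levitzki theorem then gives that $R$ is one-sided artinian, and Corollary~\ref{cor1} immediately concludes that $R$ is quasi-Frobenius. So the work splits into showing (a) $R/J(R)$ is semisimple and (b) $J(R)$ is nilpotent.

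For step (a), since $_RR$ is automorphism-invariant, the general theory of automorphism-invariant modules gives that $\End(_RR)/J(\End(_RR))$ is von Neumann regular; translated via $\End(_RR)\cong R^{op}$, this says $R/J(R)$ is von Neumann regular. Left or right noetherianness of $R$ descends to $R/J(R)$, and in any von Neumann regular ring every finitely generated one-sided ideal is a direct summand. Under either chain condition, every one-sided ideal of $R/J(R)$ is thus a direct summand, so $R/J(R)$ is semisimple.

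For step (b), I would use the identification $J(R)=Z_l(R)$ for left automorphism-invariant rings (the same fact from \cite[Theorem~3.3]{Book} already invoked in Lemma~\ref{finite}). In the left noetherian case, $R$ satisfies the ascending chain condition on left annihilators, and by the classical Mewborn--Winton theorem the left singular ideal $Z_l(R)$ is nilpotent. In the right noetherian case the same theorem does not apply verbatim; the plan is to exploit $J=Z_l$ together with the already-established semisimplicity of $R/J(R)$ to first show that every element of $J$ is nilpotent, so that $J$ becomes a nil ideal, and then invoke Levitzki's theorem that in a right noetherian ring every nil one-sided ideal is nilpotent.

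Once (a) and (b) are in hand, $R$ is semiprimary; Hopkins--Levitzki upgrades the one-sided noetherian hypothesis to the corresponding one-sided artinian condition, and Corollary~\ref{cor1} finishes the argument. The main obstacle is the right noetherian case of step (b): the left noetherian case follows directly from Mewborn--Winton, but in the right noetherian case one must work harder, extracting nilness of elements of $J$ from the fine structure forced by $J=Z_l(R)$ and the semisimplicity of $R/J(R)$ before Levitzki's theorem can be applied.
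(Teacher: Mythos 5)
Your overall strategy coincides with the paper's: establish that $R/J(R)$ is semisimple and that $J(R)$ is nilpotent (so that $R$ is semiprimary, hence right perfect), and then feed the result into the perfect/artinian machinery (the paper goes directly through Theorem~\ref{selfinj}; your extra pass through Hopkins--Levitzki and Corollary~\ref{cor1} is logically equivalent and harmless). Step (a) is correct and is exactly what the paper does via semiregularity: $R/J(R)$ is von Neumann regular by \cite[Theorem~3.3]{Book}, and a one-sided noetherian von Neumann regular ring is semisimple. The left noetherian half of step (b) is also fine and matches the paper's citation of \cite[Proposition~3.31]{Good} (Mewborn--Winton: ACC on left annihilators forces $Z_l(R)$ to be nilpotent).

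The genuine gap is the right noetherian half of step (b), which you explicitly leave as a ``plan.'' The difficulty is real and not just a matter of working harder along the lines you sketch: the standard annihilator-chain argument that makes the singular ideal nil uses the chain $l_R(a)\subseteq l_R(a^2)\subseteq\cdots$, and right noetherianness gives ACC on \emph{right} annihilators, i.e.\ DCC on left annihilators, so that chain is not forced to stabilize. Nilness of $Z_l(R)$ in a right noetherian ring is therefore a theorem on the ``wrong side,'' and neither $J=Z_l(R)$ alone nor the semisimplicity of $R/J(R)$ hands it to you elementwise in any evident way. The paper closes exactly this point by citing \cite[Corollary~1.3]{Johns}, which asserts that the left singular ideal of a right noetherian ring is nilpotent (Johns's argument rests on Lanski's theorem that nil subrings of rings with the relevant annihilator chain condition are nilpotent, which plays the role you assign to Levitzki). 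To complete your proof you should either quote that result or supply its proof; as written, the right noetherian case is not established.
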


\begin{proof}
As $R$ is left automorphism-invariant, $R$ is semiregular (see e.g. \cite[Theorem~3.3]{Book}). And, as $R$ is left or right noetherian, $R/J(R)$ is semisimple. Thus, $R$ is semiperfect. Moreover, $J=J(R)$ equals the left singular ideal $Z_l(R)$ (see e.g. \cite[Theorem~3.3]{Book}).

Now, we may apply \cite[Proposition~3.31]{Good} in case $R$ is left noetherian, or \cite[Corollary 1.3]{Johns} in case $R$ is right noetherian, to prove that $J(R)$ is nilpotent. Therefore $R$ is right perfect and we can apply Theorem~\ref{selfinj} to show that it is left self-injective. Finally, $R$ is quasi-Frobenius, since it is also left or right noetherian.

\end{proof}

As every left 1-MacWilliams ring is left automorphism-invariant, we deduce:

\begin{cor}
	Every left 1-MacWilliams left or right noetherian ring is quasi-Frobenius. 
\end{cor}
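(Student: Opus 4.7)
The plan is to observe that this corollary is an immediate combination of two earlier results, so essentially no new argument is needed. First, I would invoke Lemma~\ref{basic}(1), which tells us that if $R$ is left $1$-MacWilliams then $R$ is a left automorphism-invariant ring; this is the trivial observation that the MacWilliams extension property for $n=1$ says every Hamming-weight preserving (equivalently, every monomorphic) homomorphism from a left submodule $L$ of $R$ into $R$ extends to an automorphism of $_RR$, which is exactly the characterization of left automorphism-invariance via monomorphisms from \cite{ESS}.

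Second, having reduced to the left automorphism-invariant setting, I would appeal directly to Theorem~\ref{bounded}, which asserts that any left or right noetherian, left automorphism-invariant ring is quasi-Frobenius. Since $R$ is assumed to be left or right noetherian by hypothesis, this immediately concludes the proof.

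There is no real obstacle to overcome here: the substantive work lies entirely in Theorem~\ref{bounded} (which in turn relies on Theorem~\ref{selfinj}, Lemma~\ref{finite}, and the nilpotency of $J(R)$ via \cite[Proposition~3.31]{Good} or \cite[Corollary~1.3]{Johns}). The corollary is essentially a dictionary translation from the MacWilliams-theoretic language to the module-theoretic language. In particular, the $n=1$ case of the MacWilliams extension property is precisely the automorphism-invariance condition on $_RR$, so no further effort is required beyond a one-line citation of each of the two results.
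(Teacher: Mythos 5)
Your proposal is correct and follows exactly the paper's own route: Lemma~\ref{basic}(1) reduces the left $1$-MacWilliams hypothesis to left automorphism-invariance, and Theorem~\ref{bounded} then yields that $R$ is quasi-Frobenius. Nothing further is needed.
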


\end{document}